\newtheorem{theorem}{Theorem}
\def\0{\leqno}
\title{Finite groups with a certain number\\ of cyclic subgroups II}
\author{Marius T\u arn\u auceanu}
\date{May 3, 2016}
\begin{document}
\maketitle

\begin{abstract}
    In this note we describe the finite groups $G$ having $|G|-2$ cyclic subgroups. This partially solves the open problem in the end of \cite{3}.
\end{abstract}

\noindent{\bf MSC (2010):} Primary 20D99; Secondary 20E34.

\noindent{\bf Key words:} cyclic subgroups, finite groups.

\bigskip

Let $G$ be a finite group and $C(G)$ be the poset of cyclic subgroups of $G$. The connections between $|C(G)|$ and $|G|$ lead to characterizations of certain finite groups $G$. For example, a basic result of group theory states that $|C(G)|=|G|$ if and only if $G$ is an elementary abelian $2$-group. Recall also the main theorem of \cite{3}, which states that $|C(G)|=|G|-1$ if and only if $G$ is one of the following groups: $\mathbb{Z}_3$, $\mathbb{Z}_4$, $S_3$ or $D_8$.

In what follows we shall continue this study by describing the finite groups
$G$ for which $$|C(G)|=|G|-2.\0(*)$$First, we observe that
certain finite groups of small orders, such as $\mathbb{Z}_6$,
$\mathbb{Z}_2\times\mathbb{Z}_4$, $D_{12}$ and $\mathbb{Z}_2\times D_8$, have this property. Our main
theorem proves that in fact these groups exhaust all finite groups $G$ satisfying $(*)$.

\begin{theorem}
    Let $G$ be a finite group. Then $|C(G)|=|G|-2$ if and only if $G$ is one of the following groups:
    $\mathbb{Z}_6$, $\mathbb{Z}_2\times\mathbb{Z}_4$, $D_{12}$ or $\mathbb{Z}_2\times D_8$.
\end{theorem}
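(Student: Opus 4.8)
The plan is to begin from the elementary counting identity relating $|G|$ to its cyclic subgroups. Partitioning $G$ according to which cyclic subgroup each element generates, and using that a cyclic group of order $n$ has exactly $\varphi(n)$ generators, one obtains
\[
|G| = \sum_{H \in C(G)} \varphi(|H|),
\]
so that
\[
|G| - |C(G)| = \sum_{H \in C(G)} \bigl(\varphi(|H|) - 1\bigr) = \sum_{\substack{H \in C(G)\\ |H| \ge 3}} \bigl(\varphi(|H|) - 1\bigr).
\]
Since $\varphi(n) - 1 = 1$ exactly when $n \in \{3,4,6\}$, while $\varphi(n)$ is never equal to $3$ and $\varphi(n) - 1 \ge 3$ for every other $n \ge 3$ (in particular for $n = 5$ and all $n \ge 7$), the equation $(*)$ forces exactly two cyclic subgroups of order in $\{3,4,6\}$ and no cyclic subgroup of any order $n \ge 5$ with $n \ne 6$. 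Writing $c_n$ for the number of cyclic subgroups of order $n$, the problem reduces to the finitely many cases with $c_3 + c_4 + c_6 = 2$ and all element orders lying in $\{1,2,3,4,6\}$.

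Next I would eliminate the impossible distributions of $(c_3,c_4,c_6)$ by short arithmetic arguments. The cases $(0,0,2)$ and $(0,1,1)$ are ruled out immediately because a cyclic subgroup of order $6$ contains one of order $3$, forcing $c_3 \ge 1$. The case $(2,0,0)$, in which $G$ would have four elements of order $3$ and no element of order $6$, is excluded by Sylow theory: the Sylow $3$-subgroup must be $\mathbb{Z}_3$ (an elementary abelian group of rank $\ge 2$ already has at least four subgroups of order $3$), whence $n_3 = 2$, contradicting $n_3 \equiv 1 \pmod 3$. For $(1,1,0)$ the unique $\mathbb{Z}_3 = \langle x\rangle$ is normal, and since $G$ has an involution but no element of order $6$, $x$ cannot be central; then $C_G(\langle x\rangle)$ has index $2$, hence even order, and any involution it contains produces an element of order $6$ by Cauchy's theorem, a contradiction. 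This leaves only $(1,0,1)$ and $(0,2,0)$.

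For $(1,0,1)$, where all element orders lie in $\{1,2,3,6\}$, I would again use the normal Sylow $3$-subgroup $P = \langle x\rangle$ and study $C_G(P)$. Each involution of $C_G(P)$ multiplies with $x$ to give an element of order $6$, and since there is a unique such element the centralizer has a unique involution and no element of order $4$; this pins $C_G(P)$ down to $\mathbb{Z}_6$. Then $[G : C_G(P)] \in \{1,2\}$ yields $|G| \in \{6,12\}$, and checking the short list of groups of order $12$ leaves exactly $\mathbb{Z}_6$ and $D_{12}$.

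The main obstacle is the purely $2$-group case $(0,2,0)$: here $G$ is a $2$-group of exponent $4$ with exactly four elements of order $4$, namely the generators $u,u^3,v,v^3$ of the two cyclic subgroups $U = \langle u\rangle$ and $V = \langle v\rangle$. The key observation is that $uv$ cannot have order $4$ (it would have to be one of $u,u^3,v,v^3$, each case collapsing $U$ and $V$), so $(uv)^2 = 1$. Analyzing the conjugation action $tut^{-1}$, with $t = uv$, against the only four available order-$4$ targets forces $U \cap V = \langle u^2\rangle = \langle v^2\rangle$ and $K := \langle u,v\rangle \cong \mathbb{Z}_2 \times \mathbb{Z}_4$, a normal subgroup containing every element of order $4$. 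Consequently every element outside $K$ is an involution and, since no new element of order $4$ may appear, must invert $K$; a second such involution would centralize $K$ and manufacture an order-$4$ element outside $K$, so all of them lie in a single coset. Hence either $G = K \cong \mathbb{Z}_2 \times \mathbb{Z}_4$, or $|G| = 16$ with $G \cong \mathbb{Z}_2 \times D_8$. I expect this last, combinatorial $2$-group analysis, controlling how an outside involution can act on $K$ without creating extra elements of order $4$, to be the delicate part of the argument.
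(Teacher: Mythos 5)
Your proposal is correct, and in the two substantive cases it takes a genuinely different route from the paper. The opening reduction is the same in both: from $\sum_{H\in C(G)}(\varphi(|H|)-1)=2$ one concludes that $G$ has exactly two cyclic subgroups of order lying in $\{3,4,6\}$ and no cyclic subgroups of any other order $\geq 5$; your enumeration by $(c_3,c_4,c_6)$ covers exactly the ground of the paper's two cases (the two subgroups having equal or distinct orders), and your eliminations of $(2,0,0)$, $(0,0,2)$, $(0,1,1)$, $(1,1,0)$ agree in substance with the paper's (for $(1,1,0)$ the paper multiplies the two normal subgroups to get a cyclic subgroup of order $12$, while you produce an element of order $6$; both work). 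The divergence is in the hard cases. For $(0,2,0)$ the paper invokes Janko's classification of finite $2$-groups with $|\Omega_2(G)|=16$ and inspects the five resulting families, whereas you argue by hand, and your sketch does complete: $(uv)^2=1$ since $uv$ of order $4$ would collapse $U$ and $V$; if $vuv^{-1}=u^{-1}$ then $(uv)^2=u(vu)v=v^2\neq 1$, a contradiction, so $u$ and $v$ commute and then $u^2v^2=1$ gives $u^2=v^2$; hence $K=\langle u,v\rangle\cong\mathbb{Z}_2\times\mathbb{Z}_4$ is normal and contains every element of order $4$; every $w\notin K$ inverts $K$ because $(wk)^2=1$ for all $k\in K$; and if $w_1,w_2$ lay in different nontrivial cosets, $w_1w_2\notin K$ would both centralize and invert $K$, or equivalently $(w_1w_2)u$ would be an element of order $4$ outside $K$ --- so $[G:K]\leq 2$, giving $\mathbb{Z}_2\times\mathbb{Z}_4$ and $\mathbb{Z}_2\times D_8$ with no outside input. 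For $(1,0,1)$ the paper counts Sylow $2$-subgroups, shows they are elementary abelian, and runs a core argument via an embedding into $S_3$, handling all orders $3\cdot 2^m$; you instead pin down $C_G(P)\cong\mathbb{Z}_6$ and get $|G|\in\{6,12\}$ at once, which is shorter and avoids the detour through $S_3$. In short, your route is elementary and self-contained, while the paper buys brevity in the $2$-group case by outsourcing it to an existing classification. Two phrasings to tighten in a write-up: in $(1,0,1)$ there are two elements of order $6$ (the generators of the unique $\mathbb{Z}_6$), so run the uniqueness argument through the subgroup --- any involution $t\in C_G(P)$ satisfies $t=(tx)^3$, hence equals the unique involution of that $\mathbb{Z}_6$; and in $(0,2,0)$ state the single-coset step via the product $w_1w_2$ as above, since an individual involution outside $K$ can never centralize $K$.
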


\begin{proof} We will use the same technique as in the proof of Theorem 2 in \cite{3}.
Assume that $G$ satisfies $(*)$, let $n=|G|$ and denote by
$d_1=1,d_2,...,d_k$ the positive divisors of $n$. If $n_i=|\{H\in C(G)\mid
|H|=d_i\}|$, $i=1,2,...,k$, then
$$\sum_{i=1}^k n_i\phi(d_i)=n.$$Since $|C(G)|=\sum_{i=1}^k
n_i=n-2$, one obtains
$$\sum_{i=1}^k n_i(\phi(d_i)-1)=2,$$which implies that we have the following possibilities:

\bigskip\hspace{10mm} Case 1. There exists $i_0\in\{1,2,...,k\}$ such that $n_{i_0}(\phi(d_{i_0})-1)=2$ and $n_i(\phi(d_i)-1)=0, \forall\, i\neq i_0$. \medskip

Since the image of the Euler's totient function does not contain odd integers $>1$, we infer that $n_{i_0}=2$ and $\phi(d_{i_0})=2$, i.e. $d_{i_0}\in\{3,4,6\}$. We remark that $d_{i_0}$ cannot be equal to 6 because in this case $G$ would also have a cyclic subgroup of order 3, a contradiction. Also, we cannot have $d_{i_0}=3$ because in this case $G$ would contains two cyclic subgroups of order 3, contradicting the fact that the number of subgroups of a prime order $p$ in $G$ is $\equiv 1\, ({\rm mod}\, p)$ (see e.g. the note after Problem 1C.8 in \cite{1}). Therefore $d_{i_0}=4$, i.e. $G$ is a 2-group containing exactly two cyclic subgroups of order 4. Let $n=2^m$ with $m\geq 3$. If $m=3$ we can easily check that the unique group $G$ satisfying $(*)$ is $\mathbb{Z}_2\times\mathbb{Z}_4$. If $m\geq 4$ by Proposition 1.4 and Theorems 5.1 and 5.2 of \cite{2} we infer that $G$ is isomorphic to one of the following groups:
\begin{itemize}
\item[-] $M_{2^m}$;
\item[-] $\mathbb{Z}_2\times\mathbb{Z}_{2^{m-1}}$;
\item[-] $\langle a,b \,|\, a^{2^{m-2}}=b^8=1,\, a^b=a^{-1},\, a^{2^{m-3}}=b^4\rangle$, where $m\geq 5$;
\item[-] $\mathbb{Z}_2\times D_{2^{m-1}}$;
\item[-] $\langle a,b \,|\, a^{2^{m-2}}=b^2=1,\, a^b=a^{-1+2^{m-4}}c,\, c^2=[c,b]=1,\, a^c=a^{1+2^{m-3}}\rangle$, where $m\geq 5$.
\end{itemize}All these groups have cyclic subgroups of order 8 for $m\geq 5$ and thus they does not satisfy $(*)$. Consequently, $m=4$ and the unique group with the desired property is $\mathbb{Z}_2\times D_8$.

\bigskip\hspace{10mm} Case 2. There exist $i_1, i_2\in\{1,2,...,k\}$, $i_1\neq i_2$, such that $n_{i_1}(\phi(d_{i_1})-1)=n_{i_2}(\phi(d_{i_2})-1)=1$ and $n_i(\phi(d_i)-1)=0, \forall\, i\neq i_1, i_2$. \medskip

Then $n_{i_1}=n_{i_2}=1$ and $\phi(d_{i_1})=\phi(d_{i_2})=2$, i.e. $d_{i_1},d_{i_2}\in\{3,4,6\}$. Assume that $d_{i_1}<d_{i_2}$. If $d_{i_2}=4$, then $d_{i_1}=3$, that is $G$ contains normal cyclic subgroups of orders 3 and 4. We infer that $G$ also contains a cyclic subgroup of order 12, a contradiction. If $d_{i_2}=6$, then we necessarily must have $d_{i_1}=3$. Since $G$ has a unique subgroup of order 3, it follows that a Sylow 3-subgroup of $G$ must be cyclic and therefore of order 3. Let $n=3\cdot 2^m$, where $m\geq 1$. Denote by $n_2$ the number of Sylow 2-subgroups of $G$ and let $H$ be such a subgroup. Then $H$ is elementary abelian because $G$ does not have cyclic subgroups of order $2^i$ with $i\geq 2$. By Sylow's Theorems, $$n_2|3 \mbox{ and } n_2\equiv 1\, ({\rm mod}\, 2),$$implying that either $n_2=1$ or $n_2=3$. If $n_2=1$, then $G\cong \mathbb{Z}_{2}^m\times\mathbb{Z}_3$, a group that satisfies $(*)$ if and only if $m=1$, i.e. $G\cong \mathbb{Z}_6$. If $n_2=3$, then $|{\rm Core}_G(H)|=2^{m-1}$ because $G/{\rm Core}_G(H)$ can be embedded in $S_3$. It follows that $G$ contains a subgroup isomorphic with $\mathbb{Z}_{2}^{m-1}\times\mathbb{Z}_3$. If $m\geq 3$ this has more than one cyclic subgroup of order 6, contradicting our assumption. Hence either $m=1$ or $m=2$. For $m=1$ one obtains $G\cong S_3$, a group that does not have cyclic subgroups of order 6, a contradiction, while for $m=2$ one obtains $G\cong D_{12}$, a group that satisfies $(*)$. This completes the proof.
\end{proof}

\vspace*{5ex}\small

\hfill
\begin{minipage}[t]{5cm}
Marius T\u arn\u auceanu \\
Faculty of  Mathematics \\
``Al.I. Cuza'' University \\
Ia\c si, Romania \\
e-mail: {\tt tarnauc@uaic.ro}
\end{minipage}

\end{document}